\documentclass[reqno]{amsart}
\usepackage{hyperref}
\usepackage{color} 
\usepackage{comment}

\setlength{\oddsidemargin}{1cm}
\setlength{\evensidemargin}{1cm}
\textheight 8.7 in
\textwidth 5.5 in

\usepackage{amsfonts,amsthm,amsmath}
\usepackage[shortlabels]{enumitem}
\setlist[enumerate,1]{label={\upshape(\roman*)}}

\usepackage{amssymb}

\theoremstyle{plain}
\newtheorem{thm}{Theorem}[section]
\newtheorem{lem}[thm]{Lemma}

\theoremstyle{definition}
\newtheorem{df}[thm]{Definition}
\newtheorem*{rem*}{Remark}

\newcommand{\Frame}{\mathcal{F}(n)}
\newcommand{\Window}{\mathcal{W}(n)}


\title{Spanning trees in directed square cycles}

\author[Tanaka]{Yuuho Tanaka}
\address{Faculty of Science and Technology, Oita University, Oita, 870-1192, Japan}
\email{tanaka-yuuho@oita-u.ac.jp}



\begin{document}

\keywords{spanning tree, square cycle, circulant graph, directed graph, Jacobsthal sequence}
\subjclass[2010]{05C05,05C10,05C20,05C70}

\begin{abstract}
We classify weakly connected spanning closed (WCSC) subgraphs of $\overrightarrow{C_n^2}$, the square of a directed $n$-vertex cycle.
Then we show that every spanning tree of $\overrightarrow{C_n^2}$ is contained in a unique nontrivial WCSC subgraph of $\overrightarrow{C_n^2}$.
As a result, we obtain a purely combinatorial derivation of the formula for the number of directed spanning trees of $\overrightarrow{C_n^2}$.
Moreover, we obtain the formula for the number of directed spanning trees of $\overrightarrow{C_n^2}$, which is a Jacobsthal number.
\end{abstract}
\maketitle

\section{Introduction}

Networks, such as those in social science and computing, can be represented as graphs.
Therefore, graphs are often used to analyze the complexity of networks.
The spanning trees of a graph correspond to paths that connect all vertices in a network, and such trees are important indicators for characterizing the complexity and reliability of the network~\cite{MY}.

In general, the number of spanning trees of a connected graph can be counted using the Matrix Tree Theorem~\cite{GR, Lee, Tutte}.
More precisely, the number of spanning trees in a graph is given by the product of the non-zero eigenvalues of its Laplacian matrix divided by the number of vertices in the graph.
Explicit formulas for the number of spanning trees have been derived for undirected circulant graphs~\cite{Louis, Med2, Med, Yong}, and for directed graphs, explicit formulas for the number of spanning trees have been obtained for directed circulant graphs and similar structures~\cite{Chen, Louis, WF}.

In particular, for the square of an undirected $n$-vertex cycle, Baron et~al.~\cite{Baron} derived the number of spanning trees using the Matrix Tree Theorem. 
This method allows the derivation of formulas for the number of spanning trees regardless of whether the number of vertices is even or odd.
Kleitman et~al.~\cite{Kleitman} counted the number of spanning trees in the square of an undirected cycle graph without using the Matrix Tree Theorem.
This alternative method provides additional insights into the structure of spanning trees in the square of a cycle graph.
Specifically, they counted the spanning trees using the topological properties of planar graphs only when the number of vertices was even, and they expressed the number of spanning trees in terms of the Fibonacci sequence.
However, for the case where the number of vertices was odd, they only mentioned that the spanning trees could be counted in a similar manner without explicitly deriving a formula.  
Munemasa et~al.~\cite{MT} successfully extended this method to count the number of spanning trees in the square of an undirected cycle graph, even when the number of vertices is odd, without using the Matrix Tree Theorem.
Furthermore, they suggested that a similar approach could be applied to higher powers of cycle graphs, such as the cube of an undirected cycle graph.  

On the other hand, Wojciechowski and Fellows~\cite{WF} derived the number of directed spanning trees for the square of a directed $n$-vertex cycle using the Matrix Tree Theorem.

\begin{thm}[Wojciechowski and Fellows~\cite{WF}]
The number of the directed spanning trees rooted by a vertex of the square of a directed $n$-vertex cycle is
\[
\frac{1}{n}\prod_{k=1}^{n-1}\left(2-\cos \frac{2k\pi}{n}-\sqrt{-1}\sin\frac{2k\pi}{n}-\cos \frac{4k\pi}{n}-\sqrt{-1}\sin\frac{4k\pi}{n}\right).
\]
\end{thm}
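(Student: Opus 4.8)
The plan is to apply the directed Matrix Tree Theorem to the Laplacian of $\overrightarrow{C_n^2}$ and to exploit the circulant structure of this digraph. First I would observe that $\overrightarrow{C_n^2}$ is a circulant digraph on the vertex set $\mathbb{Z}/n\mathbb{Z}$ in which each vertex $i$ has precisely the two out-arcs $i\to i+1$ and $i\to i+2$. Hence its adjacency matrix is the circulant $A=P+P^2$, where $P$ is the cyclic shift permutation matrix sending $e_i\mapsto e_{i+1}$. Both in- and out-degrees are constant equal to $2$, so the (out-degree) Laplacian is $L=2I-A=2I-P-P^2$, a circulant matrix whose row sums vanish.

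Second, I would diagonalize $L$ in the discrete Fourier basis. Since every circulant matrix is diagonalized by the Vandermonde matrix of $n$-th roots of unity, the eigenvalues of $P$ are $\omega^k=\exp(2\pi\sqrt{-1}\,k/n)$ for $k=0,\dots,n-1$, and so the eigenvalues of $L$ are
\[
\lambda_k=2-\omega^k-\omega^{2k}=2-\cos\frac{2k\pi}{n}-\sqrt{-1}\sin\frac{2k\pi}{n}-\cos\frac{4k\pi}{n}-\sqrt{-1}\sin\frac{4k\pi}{n}.
\]
These are exactly the factors appearing in the statement. Note that $\lambda_0=2-1-1=0$, corresponding to the all-ones eigenvector and reflecting weak connectivity, while $\lambda_1,\dots,\lambda_{n-1}$ are the nonzero eigenvalues.

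Third, I would connect the cofactors of $L$ to this spectrum. The directed Matrix Tree Theorem asserts that the number of spanning arborescences rooted at a fixed vertex $i$ equals the principal minor $\det L^{(i)}$ obtained by deleting row and column $i$ from $L$. Comparing the characteristic polynomial $\det(xI-L)=x\prod_{k=1}^{n-1}(x-\lambda_k)$ with its expansion as $\sum_{j}(-1)^{n-j}E_{j}(L)\,x^{j}$, where $E_{n-1}(L)=\sum_i\det L^{(i)}$ is the sum of all $(n-1)\times(n-1)$ principal minors, the coefficient of $x$ yields $\sum_{i}\det L^{(i)}=\prod_{k=1}^{n-1}\lambda_k$. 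Vertex-transitivity of the circulant digraph then forces all $n$ principal minors $\det L^{(i)}$ to coincide, so each equals $\frac{1}{n}\prod_{k=1}^{n-1}\lambda_k$, which is precisely the claimed formula.

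The computation is essentially routine once this framework is fixed; the only genuine points requiring care are the bookkeeping of the directed Matrix Tree Theorem (choosing the out-degree Laplacian so that the counted objects are arborescences rooted toward the chosen vertex, and using that $\overrightarrow{C_n^2}$ is Eulerian so the count is independent of orientation toward or away from the root) and the justification that vertex-transitivity makes all principal minors agree. The latter I would obtain by noting that conjugating $L$ by the shift $P$ permutes the index set cyclically while fixing $L$, so the matrix obtained by deleting row and column $i$ is similar to the one obtained by deleting row and column $i+1$; hence all the minors are equal and the factor $\frac{1}{n}$ emerges cleanly.
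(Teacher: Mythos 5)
Your proposal is correct, and it follows exactly the route the paper attributes to this theorem: the paper does not prove the statement itself but cites Wojciechowski and Fellows, who derived it via the directed Matrix Tree Theorem applied to the circulant Laplacian $2I-P-P^2$, precisely as you do (diagonalization by roots of unity, identification of the nonzero eigenvalues with the stated factors, and vertex-transitivity to split the sum of principal minors into $n$ equal parts). The bookkeeping points you flag---matching the out-arborescence convention of the paper's definition with the appropriate Laplacian, which is harmless here since in- and out-degrees both equal $2$ so the two Laplacians coincide---are handled adequately.
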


In this paper, we count the number of spanning trees in the square of a directed cycle graph without using the Matrix Tree Theorem as in \cite{MT}.
Instead, we utilize the fact that the directed spanning trees of a directed strip graph correspond to a special class of directed spanning trees.
Moreover, we express the number of spanning trees in the square of the directed cycle graph using the Jacobsthal sequence.

The key idea in our proof is the fact that every directed spanning tree of $\overrightarrow{C_n^2}$ is contained in a unique nontrivial weakly connected spanning closed (WCSC) subgraph.

This paper is organized as follows.
In Section~\ref{sec:2}, we establish notation for the directed square cycle as a circulant graph, and we give some properties of the Jacobsthal sequence.
In Section~\ref{sec:3}, we give a classification of WCSC subgraphs of $\overrightarrow{C_n^2}$.
In Section~\ref{sec:4}, we show that the set of the directed spanning trees of $\overrightarrow{C_n^2}$ coincides with the disjoint union of the set of the directed spanning trees of directed strip graphs with tails $S_{n,k,j}$, defined in Section~\ref{sec:2}.
As a consequence, we deduce a combinatorial proof of the formula for $t(\overrightarrow{C_n^2}
)$.

\section{Preliminaries}\label{sec:2}

\begin{df}[Harary~\cite{Harary}]
Let $\overrightarrow{G}$ be a directed graph and
\[
E=\{\{u,v\} \mid (u,v)\in E(\overrightarrow{G}),\;u,v\in V(\overrightarrow{G})\}.
\]
If $G=(V(\overrightarrow{G}),E)$ is connected, then $\overrightarrow{G}$ is calld a \emph{weakly connected graph}.

For a directed graph $\overrightarrow{G}$, we say that $\overrightarrow{G'}$ satisfying
\[
E(\overrightarrow{G'})\subseteq E(\overrightarrow{G}),V(\overrightarrow{G})=V(\overrightarrow{G'})
\]
is a \emph{spanning subgraph} of $\overrightarrow{G}$.
If a spanning subgraph $\overrightarrow{G'}$ in a weakly connected graph $\overrightarrow{G}$ satisfies the following, then $\overrightarrow{G'}$ is called a \emph{directed spanning tree rooted by $u$} of the graph $G$:
\begin{itemize}
\item $\overrightarrow{G'}$ is a weakly connected graph;
\item there are no closed paths in $\overrightarrow{G'}$;
\item for any $v\in V(\overrightarrow{G'})\setminus\{u\}$, its in-degree is $1$;
\item for any $v\in V(\overrightarrow{G'})\setminus\{u\}$, there exists some directed paths from a root $u$.
\end{itemize}
\end{df}

\begin{df}
Let $n$ be an integer with $n\geq5$.
The \emph{square of the directed $n$-vertex cycle},
or the \emph{directed square cycle} for short,
denoted  $\overrightarrow{C_n^2}$,
is defined by $V(\overrightarrow{C^2_n})= \mathbb{Z}_n=\mathbb{Z}/n\mathbb{Z}$, $E(\overrightarrow{C^2_n})=\{(v_i,v_j)\mid v_i,v_j\in V(\overrightarrow{C_n^2}),\;i,j\in\mathbb{Z},\;j-i=1,2\}$, where $v_i=i+n\mathbb{Z}\in\mathbb{Z}_n$.
\end{df}

This is equivalent to a Cayley graph Cay$(\mathbb{Z}_n,\{+1,+2\})$ (see \cite{Tanaka}).

Let $n$ be an integer with $n\geq5$.
Then, $E(\overrightarrow{C_n^2})=\{e_i \mid i\in\mathbb{Z}\}\cup\{f_i\mid i\in\mathbb{Z}\}$, where we define \emph{frame} $e_i$ and \emph{window} $f_i$ as follows:
\[
e_i=(v_i,v_{i+1}),\;f_i=(v_i,v_{i+2}) \quad (i\in\mathbb{Z}).
\]
We denote by $\Frame$ and $\Window$ the sets of frames and windows, respectively, as follows:
\begin{align*}
\Frame&=\{e_i \mid 1\leq i\leq n\}, \\
\Window&=\{f_i\mid 0\leq i\leq n-1\}.
\end{align*}

By a \emph{triangle} of $\overrightarrow{C_n^2}$ (see \cite{GLMY2}) we mean a set
\[
T_i=\{e_i,e_{i+1},f_i\} \quad \text{($i\in\mathbb{Z}$).}
\]
Then,
\[
E(\overrightarrow{C_n^2})=\bigcup_{i=0}^{n-1}T_i.
\]

\begin{df}
Given $i$ ($i\in\mathbb{Z}$), if a subgraph $\overrightarrow{G}$ of $\overrightarrow{C_n^2}$ satisfies $|T_i\cap E(\overrightarrow{G})|\leq1$ or $T_i\subseteq E(\overrightarrow{G})$, then $\overrightarrow{G}$ is said to be \emph{closed} with respect to the triangle $T_i$.
A subgraph $\overrightarrow{G}$ of $\overrightarrow{C_n^2}$ is said to be \emph{closed} if $\overrightarrow{G}$ is closed with respect to $T_i$ for all $i$ ($i\in\mathbb{Z}$).
\end{df}

\begin{df}
The directed graph $\overrightarrow{S_k}$ defined by $V(\overrightarrow{S_k})=\{1,2,\dots,k\}$,
$E(\overrightarrow{S_k})=\{(i,j) \mid i,j\in V(\overrightarrow{S_k}),1\leq j-i\leq 2\}$ 
is called a \emph{directed strip graph} (see \cite{GLMY1}).
\end{df}

The sequence of numbers $J_n$ defined by the recurrence relation $J_0=0,J_1=1,J_{n+2}=J_{n+1}+2J_n$
($n=0,1,2,\dots$) is called the \emph{Jacobsthal sequence}.
The following lemma is due to Horadam~\cite{AFH}.

\begin{lem}
\label{jac3}
For $n\geq2$,
\[
\sum_{k=1}^{n-1}J_i=\frac{1}{2}(J_{n+1}-(-1)^n).
\]
\end{lem}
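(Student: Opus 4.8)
The plan is to prove the identity by induction on $n$, using the defining recurrence $J_{n+2} = J_{n+1} + 2J_n$ as the engine. Write $S_n = \sum_{i=1}^{n-1} J_i$ for the left-hand side (reading the summand as $J_i$ over $1 \le i \le n-1$). The base case $n = 2$ reduces to $S_2 = J_1 = 1$, which I check directly against the claimed closed form. For the inductive step I observe that $S_{n+1} = S_n + J_n$, so if $S_n = \frac{1}{2}(J_{n+1} - c)$ for the appropriate closing term $c$, then $S_{n+1} = \frac{1}{2}(J_{n+1} - c) + J_n$; substituting the recurrence in the form $\frac{1}{2}J_{n+2} = \frac{1}{2}J_{n+1} + J_n$ gives $S_{n+1} = \frac{1}{2}(J_{n+2} - c)$. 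Thus the statement telescopes through the recurrence, and the inductive step closes precisely when the correction term $c$ on the right is genuinely independent of $n$.

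An equally short route, which I would run first in order to pin down the correct value of that correction term, is the Binet-type closed form $J_i = \frac{1}{3}(2^i - (-1)^i)$. The left-hand side then splits as $\frac{1}{3}\sum_{i=1}^{n-1} 2^i - \frac{1}{3}\sum_{i=1}^{n-1}(-1)^i$, i.e. a geometric sum giving $\frac{1}{3}(2^n - 2)$ minus an alternating sum. The alternating sum $\sum_{i=1}^{n-1}(-1)^i$ equals $0$ or $-1$ according to the parity of $n$, and collecting these terms against $J_{n+1} = \frac{1}{3}(2^{n+1} + (-1)^n)$ fixes the closing term on the right-hand side exactly.

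The main point requiring care, and essentially the only obstacle, is this parity-dependent alternating contribution: it is exactly the term that decides whether the correction on the right-hand side should be read as the constant $1$ or as the parity factor $(-1)^n$, and it is the place where the induction and the Binet computation must be reconciled. Everything else is a mechanical telescoping driven by $J_{n+2} = J_{n+1} + 2J_n$. I would therefore carry out the Binet computation first to settle the correct closing term, and then present the induction as the clean write-up, since the recurrence makes the inductive step a one-line substitution.
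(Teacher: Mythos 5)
Your two routes are both sound, but you stopped one step short of the conclusion, and that conclusion matters: carried out, your argument does not prove the lemma as printed --- it disproves it. Your inductive step is exactly right: $S_{n+1}=S_n+J_n$ together with $\tfrac{1}{2}J_{n+2}=\tfrac{1}{2}J_{n+1}+J_n$ shows that whatever closing term $c$ appears on the right-hand side propagates unchanged, so it must be a constant independent of $n$; the base case $S_2=J_1=1=\tfrac{1}{2}(J_3-c)$ then forces $c=1$. Your Binet computation gives the same answer: $\sum_{i=1}^{n-1}J_i=\tfrac{1}{3}(2^n-2)-\tfrac{1}{3}\sum_{i=1}^{n-1}(-1)^i$ equals $\tfrac{1}{3}(2^n-1)$ for $n$ even and $\tfrac{1}{3}(2^n-2)$ for $n$ odd, and in both parities this matches $\tfrac{1}{2}(J_{n+1}-1)$, not $\tfrac{1}{2}(J_{n+1}-(-1)^n)$. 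So there is nothing to ``reconcile'': both computations yield the constant $1$, and the identity as printed fails for every odd $n$. Concretely, for $n=3$ the left side is $J_1+J_2=2$ while $\tfrac{1}{2}(J_4-(-1)^3)=\tfrac{1}{2}(5+1)=3$. The discrepancy is invisible for even $n$ because $J_{n+1}=2J_n+(-1)^n$, so the printed right-hand side $\tfrac{1}{2}(J_{n+1}-(-1)^n)$ equals $J_n$ identically and coincides with $\tfrac{1}{2}(J_{n+1}-1)$ exactly when $n$ is even.

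For comparison with the paper: the paper offers no proof at all --- the lemma is quoted as due to Horadam --- and Horadam's identity is indeed the constant-$1$ form your computations produce, namely $\sum_{i=1}^{n}J_i=\tfrac{1}{2}(J_{n+2}-1)$; the $(-1)^n$ in the printed statement is an error. Note, moreover, that the paper's own later use of the lemma (the computation of $t(\overrightarrow{C_n^2},v_j)=J_n$) requires the corrected form: in the odd case it needs $1+\sum_{k=1}^{n-1}J_k=J_n$, i.e.\ $\sum_{k=1}^{n-1}J_k=J_n-1=\tfrac{1}{2}(J_{n+1}-1)$, whereas the printed form would give $1+J_n$. So finish your write-up by committing to $c=1$ (your induction already proves the correction term cannot depend on $n$, which by itself rules out $(-1)^n$); as a proof of the statement exactly as printed, your attempt rightly cannot succeed, because that statement is false for odd $n$.
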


Moreover, we have the following lemma.
\begin{lem}
\label{lem:1}
Let $t(\overrightarrow{S_n},1)$ be the number of directed spanning trees rooted by $1$ of $\overrightarrow{S_n}$.
For $n\geq2$, 
\[
t(\overrightarrow{S_n},1)=J_{n-2}+J_{n-1}.
\]
\end{lem}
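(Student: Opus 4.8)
The plan is to count the directed spanning trees of $\overrightarrow{S_n}$ rooted at $1$ directly and then match the result to $J_{n-2}+J_{n-1}$ via the Jacobsthal recurrence. The starting observation is that every edge of $\overrightarrow{S_n}$ has the form $(i,j)$ with $i<j$, so along any directed walk the vertex index strictly increases. Consequently, any spanning subgraph in which each non-root vertex has in-degree exactly $1$ is automatically acyclic, since a closed path would have to return to a smaller index. Moreover, tracing the unique in-edges backwards from any vertex produces a strictly decreasing sequence of indices, which must terminate at the unique vertex carrying no in-edge, namely the root $1$; reversing this walk gives a directed path from $1$ to the chosen vertex. Hence such a subgraph is automatically weakly connected and root-reachable, and a directed spanning tree rooted at $1$ is the same datum as a choice, for each vertex $j\in\{2,\dots,n\}$, of exactly one in-edge present in $\overrightarrow{S_n}$.

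First I would record the admissible in-edges: vertex $2$ admits only $(1,2)$, while each vertex $j$ with $3\le j\le n$ admits exactly the two edges $(j-1,j)$ and $(j-2,j)$. Since these choices are independent and each yields a valid tree by the previous paragraph, one immediately obtains $t(\overrightarrow{S_n},1)=2^{n-2}$.

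Next I would exhibit a recursion matching the Jacobsthal recurrence, which is the cleanest route to the stated closed form. The vertex $n$ is a sink of $\overrightarrow{S_n}$, hence a leaf of every rooted spanning tree; deleting $n$ together with its in-edge restricts a spanning tree of $\overrightarrow{S_n}$ to one of $\overrightarrow{S_{n-1}}$, and conversely each of the two in-edges of $n$ (for $n\ge3$) extends any spanning tree of $\overrightarrow{S_{n-1}}$, giving $t(\overrightarrow{S_n},1)=2\,t(\overrightarrow{S_{n-1}},1)$. Refining this by also conditioning on the in-edge of $n-1$, which is likewise a leaf whenever the in-edge of $n$ is the window $(n-2,n)$, yields $t(\overrightarrow{S_n},1)=t(\overrightarrow{S_{n-1}},1)+2\,t(\overrightarrow{S_{n-2}},1)$, exactly the Jacobsthal recurrence.

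Finally I would close by induction. Setting $b_n=J_{n-2}+J_{n-1}$, the base cases $b_2=J_0+J_1=1$ and $b_3=J_1+J_2=2$ agree with $t(\overrightarrow{S_2},1)$ and $t(\overrightarrow{S_3},1)$, and a short computation using $J_m=J_{m-1}+2J_{m-2}$ shows $b_n=b_{n-1}+2b_{n-2}$; thus $b_n$ and $t(\overrightarrow{S_n},1)$ obey the same recurrence with the same initial data. The point that needs the most care is the structural claim of the first paragraph — that selecting one in-edge per non-root vertex always produces a genuine weakly connected, acyclic, root-reachable arborescence — since the whole count rests on that equivalence; once it is secured, both the bare power-of-two count and the Jacobsthal recursion are routine.
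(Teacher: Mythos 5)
Your proof is correct, and it takes a genuinely more direct route than the paper while containing the paper's argument as a special case. The paper proves the lemma by induction on $n$: it conditions on the unique in-edge of vertex $n$ (either $(n-1,n)$ or $(n-2,n)$), notes that deleting vertex $n$ in either case leaves a spanning tree of $\overrightarrow{S_{n-1}}$, obtains $t(\overrightarrow{S_n},1)=2\,t(\overrightarrow{S_{n-1}},1)$, and finishes with the Jacobsthal recurrence in the form $2(J_{n-3}+J_{n-2})=J_{n-2}+J_{n-1}$. Your third paragraph reproduces exactly this doubling recursion, so the inductive core is shared; but your leading structural observation is stronger than anything the paper makes explicit: since every edge of $\overrightarrow{S_n}$ increases the vertex index, a directed spanning tree rooted at $1$ is precisely an independent choice of one in-edge for each vertex $2,\dots,n$, which yields $t(\overrightarrow{S_n},1)=2^{n-2}$ immediately and exposes the identity $J_{n-2}+J_{n-1}=2^{n-2}$ that the paper leaves hidden. (Incidentally, this bijection is also what justifies the paper's unproved claim that $t(\overrightarrow{S_n},1;(n-2,n))=t(\overrightarrow{S_{n-1}},1)$.) Given the closed form $2^{n-2}$, your detour through the refined recursion $t_n=t_{n-1}+2t_{n-2}$ is valid but redundant: it would be shorter to check that $b_n=J_{n-2}+J_{n-1}$ satisfies $b_2=1$ and $b_n=2b_{n-1}$. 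One small point to tighten, which is indeed the point you flagged: your acyclicity argument only excludes \emph{directed} closed walks, whereas the paper's notion of path allows edges to be traversed in either direction; to rule out undirected cycles, observe that the highest-indexed vertex on such a cycle would need in-degree at least $2$, or simply note that a weakly connected spanning subgraph with $n-1$ edges has no cycles of any kind. With that sentence added, the equivalence underpinning your count is fully secured.
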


\begin{proof}
We prove the assertion by induction on $n$.
The assertion is trivial if $n=2$, 
we may assume that $n\geq 3$.

Let $t(\overrightarrow{S_n},1;e)$ be the number of directed spanning trees rooted by $1$ in the directed strip graph $\overrightarrow{S_n}$ that contain the edge $e$.
From the definition of a directed spanning tree, any directed spanning tree rooted by $1$ in $\overrightarrow{S_n}$ has an edge of either, $(n-2,n)$ or $(n-1,n)$. 
Thus we have $t(\overrightarrow{S_n},1)=t(\overrightarrow{S_n},1;(n-1,n))+t(\overrightarrow{S_n},1;(n-2,n))$, and 
we have $t(\overrightarrow{S_n},1;(n-1,n))=t(\overrightarrow{S_n},1;(n-2,n))=t(\overrightarrow{S_{n-1}},1)$.
Therefore, we have
\[
t(\overrightarrow{S_n},1)=2\times t(\overrightarrow{S_{n-1}},1)=2(J_{n-3}+J_{n-2})=J_{n-2}+J_{n-1}.
\]
\end{proof}

The following substructures are revised by the substructure in \cite{Kleitman} for directed square cycles.

\begin{df}
Let $n\geq5$. For integers $j$ and $k$ with
$0\leq k\leq \lceil\frac{n-2}{2}\rceil$, we define 
the graph $\overrightarrow{S_{n,k,j}}$ as follows:
\begin{align*}
V(\overrightarrow{S_{n,k,j}})&=V(\overrightarrow{C_n^2}),\\
E(\overrightarrow{S_{n,k,j}})&=E(\overrightarrow{C_n^2})\setminus ES(n,k,j), 
\quad (j,k\in\mathbb{Z},\;0\leq k\leq \lceil\frac{n-2}{2}\rceil),
\end{align*}
where
\[
ES(n,k,j)=\{f_{j-2},f_{j+2k-1}\}\cup\{e_{j-1},\dots,e_{j+2k-1}\} \quad (j,k\in\mathbb{Z},\;0\leq k\leq \lceil\frac{n-2}{2}\rceil).
\]
The graph $\overrightarrow{S_{n,k,j}}$ is called a \emph{directed strip graph with tails}, and $ES(n,k,j)$ is called the \emph{escape route}.
\end{df}

An escape route is the set of edges crossed by a path from the interior to the outside region, in the planar drawing of $\overrightarrow{C_n^2}$. 
Removing the escape route from $\overrightarrow{S_{n,k,j}}$ results in a directed strip graph with tails.

The graphs $\overrightarrow{S_{n,k,j}}$ are WCSC subgraphs of $\overrightarrow{C_n^2}$.
Clearly, $\overrightarrow{C_n^2}$ and $(\mathbb{Z}_n,\{f_0,f_1,\ldots,f_{n-1}\})$ for $n$ odd are also weakly connected spanning subgraphs of $\overrightarrow{C_n^2}$, and we call these subgraphs \emph{trivial} weakly connected spanning subgraphs.

For a graph $\overrightarrow{G}$, let $T_{\overrightarrow{G},u}$ be the set of all directed spanning trees rooted by $u$ of $\overrightarrow{G}$.
Then $t(\overrightarrow{G},u)=|T_{\overrightarrow{G},u}|$.
Since $\overrightarrow{S_{n,k,j}}$ can be obtained from the directed strip graph $\overrightarrow{S_{n-2k}}$ by attaching two tails of length $k$, the following lemma holds.

\begin{lem} \label{lem:strip_tail}
For $v_j\in\mathbb{Z}_n,\;j,k\in\mathbb{Z},\;0\leq k\leq \lceil\frac{n-2}{2}\rceil$,
\[
t(\overrightarrow{S_{n,k,j}},v_j)
=t(\overrightarrow{S_{n-2k}},j)
=t(\overrightarrow{S_{n-2k}},1).
\]
\end{lem}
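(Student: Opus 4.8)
The plan is to turn the informal description ``$\overrightarrow{S_{n,k,j}}$ is $\overrightarrow{S_{n-2k}}$ with two tails of length $k$'' into an exact description of the surviving edges, and then to observe that both tails are forced in every out-arborescence rooted at $v_j$, so that they contribute a unique extension. First I would locate the core. Deleting the consecutive frames $e_{j-1},e_j,\dots,e_{j+2k-1}$ from $\overrightarrow{C_n^2}$ breaks the frame cycle and leaves the directed frame path $v_{j+2k}\to v_{j+2k+1}\to\cdots\to v_{j-1}$ on exactly $n-2k$ vertices. I would then check that among these vertices every window $f_i=(v_i,v_{i+2})$ survives (neither of the two deleted windows $f_{j-2}$ and $f_{j+2k-1}$ has both endpoints in this range), so the induced subgraph on $\{v_{j+2k},\dots,v_{j-1}\}$ is precisely $\overrightarrow{S_{n-2k}}$ under the order-preserving relabeling $v_{j+2k}\mapsto1,\dots,v_{j-1}\mapsto n-2k$.

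Next I would identify the two tails by a degree count on the $2k$ vertices $v_j,v_{j+1},\dots,v_{j+2k-1}$. Every frame incident to such a vertex lies in the escape route, so the only surviving incident edges are windows, and a window changes the index by $2$ and hence preserves parity. Tracing the even indices yields the directed path $v_j\to v_{j+2}\to\cdots\to v_{j+2k-2}\to v_{j+2k}$, running from the root $v_j$ into the core start, while tracing the odd indices yields $v_{j-1}\to v_{j+1}\to\cdots\to v_{j+2k-1}$, hanging off the core end $v_{j-1}$. The key bookkeeping is that $v_j$ has in-degree $0$ (both $e_{j-1}$ and $f_{j-2}$ are deleted) and out-degree $1$; that $v_{j+2k-1}$ has out-degree $0$; and that every other tail vertex, as well as the core start $v_{j+2k}$, has in-degree exactly $1$ in $\overrightarrow{S_{n,k,j}}$. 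Finally I would rule out edges between the two tails, and between a tail and the core other than the single entry edge $f_{j+2k-2}$ and exit edge $f_{j-1}$; this again follows from parity together with the deletion of $f_{j-2}$ and $f_{j+2k-1}$.

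Given this structure, the count follows by forcing. Since an out-arborescence is spanning, every tail vertex and the core start must be reached, and each of them has in-degree exactly $1$ in $\overrightarrow{S_{n,k,j}}$; hence its unique incoming edge, and therefore every edge of the two tails, lies in the arborescence. Consequently, restricting an arborescence to the core yields an out-arborescence of $\overrightarrow{S_{n-2k}}$ rooted at its source $v_{j+2k}$ (no core vertex other than the start can receive an edge from outside the core), and conversely every core arborescence extends uniquely by reattaching the two forced tails, which introduces no cycle and keeps every vertex reachable from $v_j$. This bijection gives $t(\overrightarrow{S_{n,k,j}},v_j)=t(\overrightarrow{S_{n-2k}},1)$, and the remaining equality $t(\overrightarrow{S_{n-2k}},j)=t(\overrightarrow{S_{n-2k}},1)$ records that the incoming tail identifies the root $v_j$ with the source of the core, and that the arborescence count of a strip depends only on its being rooted at the source, not on the label of that vertex.

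I expect the main obstacle to be the edge- and degree-bookkeeping in the tail region under the cyclic (mod $n$) indexing: one must handle the two boundary windows $f_{j-2}$ and $f_{j+2k-1}$ and the extreme values of $k$ (including $k=0$, where there are no tails, and $k$ near $\lceil\frac{n-2}{2}\rceil$) carefully enough that ``two tails of length $k$'' becomes a genuine equality of graphs rather than a heuristic. Once the decomposition is pinned down, the forcedness of the tails and the resulting bijection are routine.
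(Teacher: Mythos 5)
Your proof is correct and takes essentially the same approach as the paper, which in fact gives no detailed proof at all: it only asserts, in the sentence preceding the lemma, that $\overrightarrow{S_{n,k,j}}$ is obtained from $\overrightarrow{S_{n-2k}}$ by attaching two tails of length $k$. Your core/tail decomposition, in-degree bookkeeping, and forcing bijection supply precisely the details behind that assertion.
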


\section{Classification of spanning subgraphs}\label{sec:3}

In this section, we give a classification of WCSC subgraphs of $\overrightarrow{C_n^2}$.

\begin{lem} \label{lem:pn}
Let $\overrightarrow{G}$ be a WCSC subgraph of $\overrightarrow{C_n^2}$.
If $k$ and $p$ are integers with $0\leq p<n$ and
\[
\{e_{k-1},f_k,f_{k+2},\dots,f_{k+2p-2},e_{k+2p}\}\subseteq E(\overrightarrow{G}),
\]
then $\{e_k,e_{k+1},\dots,e_{k+2p-1}\}\subseteq E(\overrightarrow{G})$.
\end{lem}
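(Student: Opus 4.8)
The plan is to argue by contradiction, reducing everything to the local closedness condition together with weak connectivity. First I would record the elementary consequence of closedness that drives the whole proof: since each window $f_{k+2i}$ (for $0\le i\le p-1$) already lies in $E(\overrightarrow{G})$, the triangle $T_{k+2i}=\{e_{k+2i},e_{k+2i+1},f_{k+2i}\}$ can never contain exactly two of its edges, so its two frames $e_{k+2i}$ and $e_{k+2i+1}$ are either both present or both absent. Since $\{e_k,\dots,e_{k+2p-1}\}=\bigcup_{i=0}^{p-1}\{e_{k+2i},e_{k+2i+1}\}$, the conclusion fails precisely when at least one of these triangles contains neither of its frames; call such a triangle \emph{off}. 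I would then suppose, for contradiction, that some $T_{k+2i}$ is off.

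Next I would analyse the boundaries of a maximal run of consecutive off triangles $T_{k+2a},T_{k+2(a+1)},\dots,T_{k+2c}$ with $0\le a\le c\le p-1$. Recall that in $\overrightarrow{C_n^2}$ every vertex $v_{k+2i+1}$ has exactly the in-edges $e_{k+2i},f_{k+2i-1}$ and the out-edges $e_{k+2i+1},f_{k+2i+1}$, and that for an off triangle the frames $e_{k+2i},e_{k+2i+1}$ are absent. For the \emph{leftmost} off triangle $T_{k+2a}$ I would show $f_{k+2a-1}\notin E(\overrightarrow{G})$: if it were present, then, using $e_{k+2a}\notin E(\overrightarrow{G})$, closedness of $T_{k+2a-1}=\{e_{k+2a-1},e_{k+2a},f_{k+2a-1}\}$ would force $e_{k+2a-1}\notin E(\overrightarrow{G})$, which by the pairing observation makes $T_{k+2(a-1)}$ off, contradicting minimality when $a\ge1$; while for $a=0$ the edge $e_{k-1}$ is present by hypothesis, so closedness of $T_{k-1}$ would instead force $e_k\in E(\overrightarrow{G})$, contradicting that $T_k$ is off. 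Symmetrically, for the \emph{rightmost} off triangle $T_{k+2c}$ I would show $f_{k+2c+1}\notin E(\overrightarrow{G})$, using either the hypothesis edge $e_{k+2p}$ (when $c=p-1$) or the frame $e_{k+2c+2}$ of the adjacent present triangle $T_{k+2(c+1)}$ (when $c<p-1$) to produce the same kind of closedness contradiction.

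Finally I would exhibit a cut contradicting weak connectivity. Set $S=\{v_{k+2a+1},v_{k+2a+3},\dots,v_{k+2c+1}\}$, the odd-offset vertices of the off run. For an interior index the frames $e_{k+2i},e_{k+2i+1}$ are absent and the two windows $f_{k+2i-1},f_{k+2i+1}$ can only reach $v_{k+2i-1},v_{k+2i+3}\in S$; at the left end the only escaping edge $f_{k+2a-1}$ has just been excluded, and at the right end $f_{k+2c+1}$ has just been excluded. Running through all four incident edges of every vertex of $S$ shows that no edge of $\overrightarrow{G}$ joins $S$ to $V(\overrightarrow{G})\setminus S$; since $S\neq\varnothing$ and, e.g., $v_k\notin S$, the underlying undirected graph of $\overrightarrow{G}$ is disconnected, contradicting that $\overrightarrow{G}$ is weakly connected. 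This forces every $T_{k+2i}$ to be on and hence $\{e_k,\dots,e_{k+2p-1}\}\subseteq E(\overrightarrow{G})$. The main obstacle is exactly the possibility that the frame-free odd vertices link up with one another through the \emph{odd} windows $f_{k+2i+1}$ rather than through frames; the whole point of the leftmost/rightmost boundary analysis is to show that such a chain can never reconnect to the rest of $\overrightarrow{G}$, so it is ruled out by weak connectivity.
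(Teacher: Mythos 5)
Your proof is correct, but it is organized genuinely differently from the paper's. The paper argues by induction on $p$: if some frame $e_{k+i}$ with $0\le i\le 2p-1$ lies in $E(\overrightarrow{G})$, closedness of the triangle containing it and a hypothesized window supplies its partner frame, and the hypothesis splits into two shorter instances to which the induction applies; in the remaining case all of $e_k,\dots,e_{k+2p-1}$ are absent, whence $f_{k-1},f_{k+2p-1}\notin E(\overrightarrow{G})$ and the set $\{v_{k+1},v_{k+3},\dots,v_{k+2p-1}\}$ is cut off, contradicting weak connectivity. You avoid the induction entirely: your pairing observation (both frames of $T_{k+2i}$ present or both absent) is exactly what the paper's splitting step exploits, and your maximal run of off triangles, with its two-sided boundary analysis (hypothesis edge $e_{k-1}$ or $e_{k+2p}$ when the run touches an end, adjacent on-triangle otherwise), is an inlined, localized version of the paper's terminal case. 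Your version buys a single-pass argument with no reindexing of the hypothesis into sub-instances; the paper's induction buys a simpler cut case, since there the separation argument only ever occurs with present frames at both ends, so only one boundary situation needs checking. One shared informality deserves a sentence: the cut argument needs the cut set to be a \emph{proper} nonempty subset of $\mathbb{Z}_n$, and you assert $v_k\notin S$ without proof. This is automatic when $2c+1<n$, but since indices are mod $n$ and $p$ may be as large as $n-1$, wrap-around is possible; in that case $v_k\in S$ would force $n$ odd and $(n-1)/2\in[a,c]$, which is impossible because the triangle $T_{k+n-1}=\{e_{k-1},e_k,f_{k-1}\}$ contains the hypothesis edge $e_{k-1}$ and so cannot be off. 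Adding that remark closes the only gap (the paper glosses over the same point in its own separation step).
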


\begin{proof}
We prove the assertion by induction on $p$.
The assertion is trivial, if $p=0$, we may assume that $p\geq1$.

Suppose that there exists an integer $i$ with $0\leq i\leq 2p-1$ such that $e_{k+i}\in E(\overrightarrow{G})$.
If $i$ is even, then since $\overrightarrow{G}$ is closed with respect to $T_{k+i}$, we have $e_{k+i+1}\in E(\overrightarrow{G})$.
Therefore, we can apply the induction to 
\[
\{e_{k-1},f_k,f_{k+2},\dots,f_{k+i-2},e_{k+i}\}
\]
and
\[
\{e_{k+i+1},f_{k+i+2},f_{k+i+4},\dots,f_{k+2p-2},e_{k+2p}\}.
\]
Similarly, if $i$ is odd, then 
we can apply the induction. 

It remains to derive a contradiction by assuming
\begin{equation} \label{eq:k3}
e_{k},e_{k+1},\dots,e_{k+2p-1}\notin E(\overrightarrow{G}).
\end{equation}
Since $\overrightarrow{G}$ is closed with respect to $T_{k-1}$, we have
\begin{equation} \label{eq:k1}
f_{k-1}\notin E(\overrightarrow{G}).
\end{equation}
Similarly, since $\overrightarrow{G}$ is closed with respect to
$T_{k+2p-1}$, we have
\begin{equation} \label{eq:k5}
f_{k+2p-1}\notin E(\overrightarrow{G}).
\end{equation}
From 
\eqref{eq:k3}, 
\eqref{eq:k1}, 
and \eqref{eq:k5}, 
we see that the set
$\{v_{k+1},v_{k+3},\dots,v_{k+2p-1}\}$ is separated
from its complement in the weakly connected spanning subgraph $\overrightarrow{G}$. 
This is a  contradiction.
\end{proof}

\begin{lem} \label{lem:2-3}
Let $\overrightarrow{G}$ be a nontrivial WCSC subgraph of $\overrightarrow{C_n^2}$.
If $E(\overrightarrow{G})$ contains no frame, then $n$ is odd, and $\overrightarrow{G}=\overrightarrow{S_{n,\frac{n-1}{2},j}}$ for some integer $j$ with $0\leq j\leq n-1$.
\end{lem}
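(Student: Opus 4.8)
The plan is to use the hypothesis that $\overrightarrow{G}$ has no frame to reduce the whole problem to the combinatorics of the window set $\Window$, and then to read off weak connectivity directly from the underlying undirected graph. First I would note that, since $E(\overrightarrow{G})$ contains no frame, every edge of $\overrightarrow{G}$ is a window, i.e. $E(\overrightarrow{G})\subseteq\Window$. Consequently $T_i\cap E(\overrightarrow{G})\subseteq\{f_i\}$ for every $i$, so $|T_i\cap E(\overrightarrow{G})|\leq 1$ and $\overrightarrow{G}$ is automatically closed. Thus the only genuine constraint remaining to analyze is weak connectivity.

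Next I would pass to the underlying undirected graph, in which each window $f_i$ becomes the edge $\{v_i,v_{i+2}\}$ joining two vertices whose indices have the same parity. If $n$ is even, then the even-indexed and odd-indexed vertices lie in distinct parity classes that cannot be joined by windows alone, so the underlying graph is disconnected, contradicting weak connectivity; this forces $n$ to be odd. For $n$ odd, since $\gcd(2,n)=1$ the sequence $v_0,v_2,v_4,\dots$ exhausts all of $\mathbb{Z}_n$ before returning to $v_0$, so the full window set $\{f_0,\dots,f_{n-1}\}$ traces out a single $n$-cycle in the underlying graph. Deleting $m\geq 1$ windows breaks this cycle into exactly $m$ arcs, hence into $m$ connected components, so weak connectivity holds if and only if at most one window is absent. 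Because the full window set is a \emph{trivial} WCSC subgraph and $\overrightarrow{G}$ is assumed nontrivial, exactly one window must be missing.

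Finally I would match this description against the definition of $\overrightarrow{S_{n,k,j}}$. The key bookkeeping step is to verify that, when $k=\frac{n-1}{2}$, the escape route $ES(n,k,j)$ collapses to all frames together with a single window $f_{j-2}$. Indeed $2k-1=n-2$ gives $f_{j+2k-1}=f_{j+n-2}=f_{j-2}$, so $\{f_{j-2},f_{j+2k-1}\}=\{f_{j-2}\}$, while $\{e_{j-1},\dots,e_{j+2k-1}\}$ is a run of $n$ consecutive frames and hence comprises all of $\Frame$. Therefore $E(\overrightarrow{S_{n,\frac{n-1}{2},j}})=\Window\setminus\{f_{j-2}\}$, and as $j$ ranges over $0,\dots,n-1$ this realizes every possible single missing window. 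Identifying the unique missing window of $\overrightarrow{G}$ with $f_{j-2}$ then yields $\overrightarrow{G}=\overrightarrow{S_{n,\frac{n-1}{2},j}}$.

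The conceptual content lies entirely in the parity-versus-connectivity dichotomy in the underlying undirected graph, which I expect to be straightforward. I anticipate that the main (though elementary) obstacle is the index arithmetic in the final matching step---confirming that the escape route genuinely degenerates to all of $\Frame$ plus a single window, and that the value $k=\frac{n-1}{2}$ is exactly the one produced by the constraint $0\leq k\leq\lceil\frac{n-2}{2}\rceil$ for $n$ odd.
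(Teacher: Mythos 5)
Your proof is correct and follows essentially the same route as the paper: both arguments reduce to showing that, with no frames present, weak connectivity forces $n$ odd and nontriviality plus connectivity force exactly one window to be missing, after which the graph is identified as $\overrightarrow{S_{n,\frac{n-1}{2},j}}$. Your version is somewhat more explicit—you argue connectivity via the single window $n$-cycle rather than the paper's edge count $|E(\overrightarrow{G})|=n-1$, and your careful index check that the missing window is $f_{j-2}$ (not $f_j$) actually cleans up a small indexing slip in the paper's last line.
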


\begin{proof}
By the assumption, $E(\overrightarrow{G})$ consists only of windows.
Since $\overrightarrow{G}$ is weakly connected, $n$ is odd.
Furthermore, since $\overrightarrow{G}$ is nontrivial, we have
\(
|E(\overrightarrow{G})|\leq n-1.
\)
Additionally, because $\overrightarrow{G}$ is weakly connected, it follows that $|E(\overrightarrow{G})|\geq n-1$.
Thus, we conclude that $|E(\overrightarrow{G})|= n-1$.
Then there exists $j$ such that $E(\overrightarrow{G})=\Window\setminus\{f_j\}=E(\overrightarrow{S_{n,\frac{n-1}{2},j}})$.
This proves $\overrightarrow{G}=\overrightarrow{S_{n,\frac{n-1}{2},j}}$.
\end{proof}

\begin{lem} \label{lem:2-2}
Let $\overrightarrow{G}$ be a nontrivial WCSC subgraph of $\overrightarrow{C_n^2}$.
If $E(\overrightarrow{G})$ contains a frame, then $\overrightarrow{G}=\overrightarrow{S_{n,k,j}}$ for some integers $j,k$ with $0\leq j\leq n-1$, $0\leq k\leq\lfloor\frac{n-2}{2}\rfloor$.
\end{lem}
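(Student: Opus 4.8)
The plan is to read the escape route directly off the pattern of absent edges, locating the deleted windows by closedness and pinning down everything else by weak connectivity. First I would record that, since $\overrightarrow{G}$ is nontrivial, not every frame can lie in $E(\overrightarrow{G})$: if all frames were present, then every triangle $T_i=\{e_i,e_{i+1},f_i\}$ would contain the two frames $e_i,e_{i+1}$, and closedness would force $f_i\in E(\overrightarrow{G})$ for all $i$, giving $\overrightarrow{G}=\overrightarrow{C_n^2}$, a trivial subgraph. Hence the set of absent frames is nonempty and proper. Throughout I would use two local consequences of closedness on $T_i$: if exactly one of $e_i,e_{i+1}$ lies in $E(\overrightarrow{G})$ then $f_i\notin E(\overrightarrow{G})$; and if both $e_i,e_{i+1}\in E(\overrightarrow{G})$ then $f_i\in E(\overrightarrow{G})$.

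Next I would fix a maximal run of absent frames $e_{j-1},e_j,\dots,e_{j+s-2}$, so that $e_{j-2},e_{j+s-1}\in E(\overrightarrow{G})$. Applying the first local rule at the two ends (triangles $T_{j-2}$ and $T_{j+s-2}$) gives $f_{j-2},f_{j+s-2}\notin E(\overrightarrow{G})$. The crux is a connectivity analysis of the vertices $v_j,\dots,v_{j+s-1}$, all of whose incoming frames have been deleted. Since windows join vertices two apart, these vertices fall into two residue classes modulo $2$, each chained internally by windows. Only $v_{j+s-1}$ keeps an outgoing frame $e_{j+s-1}$ to the complement, and the only window that can attach the other class to the complement is $f_{j-1}$ entering $v_{j+1}$, because the remaining boundary windows $f_{j-2},f_{j+s-2}$ are absent. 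If $s$ were even, then the parity class of $v_j$ would contain neither $v_{j+1}$ nor $v_{j+s-1}$, so its only possible links to the complement would be $f_{j-2}$ and $f_{j+s-2}$, both absent, contradicting weak connectivity; hence $s$ is odd, say $s=2k+1$. This matches the deleted frames $e_{j-1},\dots,e_{j+2k-1}$ and the end windows $f_{j-2}$, $f_{j+s-2}=f_{j+2k-1}$ of $ES(n,k,j)$. The same connectivity forces each intermediate window $f_{j-1},f_j,\dots,f_{j+2k-2}$ into $E(\overrightarrow{G})$, since deleting one would sever part of a parity chain from its unique attachment point. I expect this middle step to be the main obstacle: correctly tracking the two parity classes, their attachment points, and the wrap-around.

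Finally I would show there is only one such run. Two or more runs would split the present frames into that many arcs, and the analysis above shows that in each run the even chain attaches to the following arc while the odd chain attaches to the preceding one, with no link between the two chains; tracing this around the cycle produces one weakly connected component per arc, so two runs would disconnect $\overrightarrow{G}$. With a single run established, every remaining triangle has two present frames, so the second local rule forces all other windows into $E(\overrightarrow{G})$; the deleted edges are therefore exactly $\{f_{j-2},f_{j+2k-1}\}\cup\{e_{j-1},\dots,e_{j+2k-1}\}=ES(n,k,j)$, which yields $\overrightarrow{G}=\overrightarrow{S_{n,k,j}}$. The bound $0\le k\le\lfloor\frac{n-2}{2}\rfloor$ is precisely the requirement that at least one frame survive, i.e.\ $2k+1\le n-1$. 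Consistency with Lemma~\ref{lem:pn} is automatic here: any same-parity window chain that could force the deleted block frames to be present would have to use the absent windows $f_{j-2}$ or $f_{j+2k-1}$, so no such chain exists, in agreement with those frames being absent.
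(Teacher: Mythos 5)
Your proof is correct, but it takes a genuinely different route from the paper's. The paper anchors on a maximal run of \emph{present} frames $e_0,\dots,e_{\ell-1}$: closedness forces $f_0,\dots,f_{\ell-2}$ in and $f_{-1},f_{\ell-1}$ out, then maximal alternating window extents are taken on either side of that run, Lemma~\ref{lem:pn} is invoked to show that no frame can occur inside those extents, and weak connectivity forces the covered vertex set $K$ to be all of $\mathbb{Z}_n$, which pins down the lengths of the extents and identifies $\overrightarrow{G}$ with $\overrightarrow{S_{n,k,j}}$. You anchor instead on maximal runs of \emph{absent} frames and never use Lemma~\ref{lem:pn}: closedness kills the two boundary windows of each run, a parity-class argument forces each run to have odd length and its interior windows to be present, and an explicit component count (one weakly connected component per arc of present frames) rules out two or more runs; closedness then fills in every remaining window, so the deleted set is exactly $ES(n,k,j)$. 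The two arguments rest on the same underlying mechanism --- a parity class of block vertices, chained only by windows, would be severed from the rest of the graph --- but the paper packages this inside the inductive proof of Lemma~\ref{lem:pn}, whereas you apply it directly; your version is self-contained, reads off the escape route as the deleted edge set, makes the reason for the odd run length transparent, and handles uniqueness of the run explicitly where the paper handles it implicitly via $K=\mathbb{Z}_n$. Two spots deserve a little more care in a full write-up: the sentence ``the only window that can attach the other class to the complement is $f_{j-1}$'' presupposes $s$ odd (harmless, since you dispose of the even case immediately afterwards), and in the multi-run step you should record that closedness also kills any window spanning a length-one run or a length-one arc, so that no arc can shortcut directly to the next one; with those details added, your argument is complete.
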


\begin{proof}
If $\Frame\subset E(\overrightarrow{G})$, then it is easy to see that $\overrightarrow{G}=\overrightarrow{C_n^2}$, contradicting the assumption that $\overrightarrow{G}$ is nontrivial.
Since $\Frame\cap E(\overrightarrow{G})\neq\emptyset$, 
there exists $i,\ell$ ($0\leq i\leq n-1$, $1\leq \ell\leq n-1$) satisfying $\{e_i,e_{i+1},\dots,e_{i+\ell-1}\}\subseteq E(\overrightarrow{G})$ and $e_{i-1},e_{i+\ell}\notin E(\overrightarrow{G})$.
Without loss of generality, we may assume that $i=0$.
In this case, we have
\begin{align}
\{e_0,e_1,\dots,e_{\ell-1}\}&\subseteq E(\overrightarrow{G}),  \label{eq:il1}\\
e_{-1}&\notin E(\overrightarrow{G}),  \label{eq:il1-1}\\
e_\ell&\notin E(\overrightarrow{G}).  \label{eq:il1-12}
\end{align}
Since $\overrightarrow{G}$ is closed with respect to $T_j$ ($0\leq j\leq \ell-2$),
\eqref{eq:il1} implies
\begin{equation} \label{eq:il11}
f_0,f_1,\dots,f_{\ell-2}\in E(\overrightarrow{G}).
\end{equation}
Furthermore, since $\overrightarrow{G}$ is closed with respect to $T_{-1}$, \eqref{eq:il1} and \eqref{eq:il1-1} imply
\begin{equation} \label{eq:il12}
f_{-1}\notin E(\overrightarrow{G}).
\end{equation}
Additionally, because $\overrightarrow{G}$ is closed with respect to $T_{\ell-1}$, \eqref{eq:il1} and \eqref{eq:il1-12} imply
\begin{equation} \label{eq:il31}
f_{\ell-1}\notin E(\overrightarrow{G}).
\end{equation}

Let $s$ and $t$ be the largest non-negative integers such that
\begin{equation} \label{eq:il2}
f_{-2},f_{-4},\dots,f_{-2s}\in E(\overrightarrow{G})
\end{equation}
and
\begin{equation} \label{eq:il3}
f_\ell,f_{\ell+2},\dots,f_{\ell+2t-2}\in E(\overrightarrow{G}),
\end{equation}
respectively.
Then, we have $f_{-2s-2}\notin E(\overrightarrow{G})$ and $f_{\ell+2t}\notin E(\overrightarrow{G})$.
We show that
\begin{equation} \label{eq:tn2}
e_\ell,e_{\ell+1},\dots,e_{\ell+2t}\notin E(\overrightarrow{G})\text{ and }t<\frac{n-\ell}{2},
\end{equation}
\begin{equation} \label{eq:sn2}
e_{-1},e_{-2},\dots,e_{-2s-1}\notin E(\overrightarrow{G})\text{ and }s<\frac{n-\ell}{2}.
\end{equation}
Assume that there exists an integer $m$ with $0\leq m\leq 2t$ and that $e_{\ell+m}\in E(\overrightarrow{G})$;
we may choose minimal such $m$.
By \eqref{eq:il1-12}, we have $m>0$.
If $m$ is odd, then $T_{\ell+m-1}\subseteq E(\overrightarrow{G})$
since $\overrightarrow{G}$ is closed and \eqref{eq:il3}.
Therefore, $e_{\ell+m-1}\in E(\overrightarrow{G})$, and this contradicts the minimality of $m$. 
If $m$ is even, then by \eqref{eq:il1} and \eqref{eq:il3}, we have 
$\{e_{\ell-1},f_l,f_{\ell+2},\dots,f_{\ell+m-1},e_{\ell+m}\}\subseteq E(G)$.
Then, by Lemma~\ref{lem:pn}, we have $e_{\ell+m-1}\in E(\overrightarrow{G})$, again contradicting the minimality of $m$.
Therefore, \eqref{eq:tn2} holds.
Similarly, we can prove \eqref{eq:sn2}.

Let $K=\{v_{-2s},v_{-2s+2},\dots,v_0,v_1,\dots,v_\ell,v_{\ell+2},\dots,v_{\ell+2t}\}$.
If $K\neq\mathbb{Z}_n$,
then by \eqref{eq:il12}, \eqref{eq:il31}, \eqref{eq:tn2} and \eqref{eq:sn2}, $\overrightarrow{G}$ is not weakly connected.
This is a contradiction. 
Therefore, $K=\mathbb{Z}_n$, and in particular,  $s+\ell+1+t=|K|=n$.
From \eqref{eq:tn2} and \eqref{eq:sn2}, $s=t=\frac{n-\ell-1}{2}$.
Then, from \eqref{eq:il2}--\eqref{eq:tn2}, we have
\begin{align*}
\{f_{\ell+1},f_{\ell+3},\dots,f_{n-2}\}&\subseteq E(\overrightarrow{G}),
\\
\{f_\ell,f_{\ell+2},\dots,f_{n-3}\}&\subseteq E(\overrightarrow{G}),
\\
e_\ell,e_{\ell+1},\dots,e_{n-1}&\notin E(\overrightarrow{G}),
\end{align*}
respectively.
Together with
\eqref{eq:il1} and \eqref{eq:il11}--\eqref{eq:il31},
these imply
$E(\overrightarrow{G})=E(\overrightarrow{S_{n,\frac{n-\ell-1}{2},\ell-1}})$.
This proves $\overrightarrow{G}=\overrightarrow{S_{n,\frac{n-\ell-1}{2},\ell-1}}$.
\end{proof}

The following theorem is obtained immediately from Lemmas \ref{lem:2-3} and \ref{lem:2-2}.

\begin{thm} \label{th:4-4}
Let $\overrightarrow{G}$ be a nontrivial WCSC subgraph of $\overrightarrow{C_n^2}$.
Then 
there exist integers $j,k$ with $0\leq j\leq n-1$, $0\leq k\leq\lceil\frac{n-2}{2}\rceil$ such that $\overrightarrow{G}=\overrightarrow{S_{n,k,j}}$.
\end{thm}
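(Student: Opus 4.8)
The plan is to argue by a single dichotomy on the edge set of $\overrightarrow{G}$, namely whether or not $E(\overrightarrow{G})$ contains at least one frame, and to invoke the two preceding structure lemmas to resolve each branch. Since every nontrivial WCSC subgraph is covered by exactly one of these two situations, this exhausts all cases.

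First I would observe that exactly one of the following holds: either $\Frame\cap E(\overrightarrow{G})=\emptyset$, or $\Frame\cap E(\overrightarrow{G})\neq\emptyset$. In the first branch, Lemma~\ref{lem:2-3} applies verbatim and yields that $n$ is odd together with $\overrightarrow{G}=\overrightarrow{S_{n,\frac{n-1}{2},j}}$ for some $j$ with $0\leq j\leq n-1$. In the second branch, Lemma~\ref{lem:2-2} applies and yields $\overrightarrow{G}=\overrightarrow{S_{n,k,j}}$ for some $j,k$ with $0\leq j\leq n-1$ and $0\leq k\leq\lfloor\frac{n-2}{2}\rfloor$. In either case the conclusion is of the required form $\overrightarrow{G}=\overrightarrow{S_{n,k,j}}$, so it remains only to reconcile the ranges of the parameter $k$.

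The one point requiring care is that the two lemmas produce slightly different ranges for $k$, and I must check that both fall within the single bound $0\leq k\leq\lceil\frac{n-2}{2}\rceil$ claimed in the theorem. For the frame branch this is immediate, since $\lfloor\frac{n-2}{2}\rfloor\leq\lceil\frac{n-2}{2}\rceil$. For the no-frame branch, $n$ is forced to be odd, so $\lceil\frac{n-2}{2}\rceil=\frac{n-1}{2}$, which is exactly the value $k=\frac{n-1}{2}$ produced by Lemma~\ref{lem:2-3}; hence the bound is attained but not exceeded. Combining the two branches therefore gives the uniform statement with $0\leq k\leq\lceil\frac{n-2}{2}\rceil$, and the theorem follows. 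I expect no genuine obstacle here, as the substantive work has already been carried out in Lemmas~\ref{lem:pn}, \ref{lem:2-3}, and \ref{lem:2-2}; the present argument is purely a matter of assembling those cases and unifying the floor/ceiling bookkeeping on $k$.
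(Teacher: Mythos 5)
Your proof is correct and follows the same route as the paper: the paper derives Theorem~\ref{th:4-4} immediately from Lemmas~\ref{lem:2-3} and~\ref{lem:2-2} via exactly this frame/no-frame dichotomy. Your explicit check that the two ranges of $k$ (the bound $\lfloor\frac{n-2}{2}\rfloor$ from the frame case and the value $\frac{n-1}{2}=\lceil\frac{n-2}{2}\rceil$ for $n$ odd from the no-frame case) merge into the single stated bound is the only bookkeeping the paper leaves implicit, and you handled it correctly.
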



\section{Enumerating directed spanning trees}\label{sec:4}

In this section we prove our main result which states that every directed spanning tree rooted by $v_j\in V(\overrightarrow{C_n^2})$ ($j\in\mathbb{Z}$) of $\overrightarrow{C_n^2}$ is contained in a unique WCSC subgraph.
Our method is a combinatorial formulation as in \cite{MT}. 
The tool we use is the homotopy theory for directed graphs~\cite{GLMY2, Lewis}.

\subsection{$C$-homotopy of directed graph}

First, we give some properties of the $C$-homotopy of a directed graph.
The $C$-homotopy group of the directed graph $\overrightarrow{G}$ with respect to a base vertex $v_j$ can be defined as given by Lewis~\cite{Lewis}.

We denote a path of length $\ell$ from $u$ to $v$ in a directed graph $\overrightarrow{G}$ by $p:=(u=u_0,u_1,\ldots,u_\ell=v)$ $(u_i\in V(\overrightarrow{G}),\;0\leq i\leq \ell)$, where $(u_{i-1},u_i)\in E(\overrightarrow{G})$ or $(u_i,u_{i-1})\in E(\overrightarrow{G})$.
If $p$ is given as above, then $p^{-1}:=(v=u_\ell,u_{\ell-1},\ldots,u_0=u)$.
We say that $p$ is a loop if $u$ and $v$ are the same, and we call $u$ the base vertex.
For each $u\in V(\overrightarrow{G})$, let $\psi(\overrightarrow{G},u)$ be the set containing all loops in $\overrightarrow{G}$ that have $u$ as their reference vertex.

We consider a binary relation $\sim$ on $\psi(\overrightarrow{G},u)$.
Let $p=(u=u_0,u_1,\dots,u_\ell=u)\in\psi(\overrightarrow{G},u)$.
Suppose there exists an integer $i$ ($0 \leq i \leq \ell$) and a vertex $v \in V(\overrightarrow{G})$ such that
\[
q=(u=u_0,u_1,\dots,u_{i-1},u_i,v,u_i,u_{i+1},\dots,u_\ell=u).
\]
Then we say that $q\in\psi(\overrightarrow{G}, u)$ extends $p$.
If $q$ extends $p$, then its length is $\ell+2$.
On the other hand, if $p$ does not extend $q$ for all $q \in\psi(\overrightarrow{G},u)$, then we say that $p$ is reduced.
Furthermore, if $\ell=0$ or if $p$ is reduced with $u_1 \neq u_{\ell-1}$ then we say that $p$ is cyclically reduced.
Let $S = (p_0, p_1, \dots, p_n)$ be a sequence of paths in $\psi(\overrightarrow{G},u)$.
We say that $S$ is admissible if for every $i$ ($1 \leq i \leq n$), either $p_{i-1}$ extends $p_i$ or $p_i$ extends $p_{i-1}$.
For any $p,q\in\psi(\overrightarrow{G},u)$, if there exists a nonnegative integer $n$ and paths $p=p_0,p_1,\dots,p_n=q\in \psi(\overrightarrow{G},u)$ such that the sequence $(p_0,p_1,\dots,p_n)$ is admissible, then we write $p \sim q$.
This equivalence relation $\sim$ is called \emph{$C$-homotopy}, and when $p \sim q$ we say that $p$ and $q$ are \emph{$C$-homotopic}.

Roughly speaking, the $C$-homotopy group $\pi_1(\overrightarrow{G},u)$ 
is the group formed by equivalence classes of based loops on a graph $\overrightarrow{G}$ with a fixed base vertex $u$.
Let $\pi_1(\overrightarrow{G},u)$ denote the set of equivalence classes of $\psi(\overrightarrow{G}, u)$ under this homotopy relation.
For any $p\in\psi(\overrightarrow{C_n^2},u)$, denote by $[p]$ the element of $\pi_1(\overrightarrow{G},u)$ containing $p$.
In particular, the concatenation on $\psi(\overrightarrow{G},u)$ induces a group structure on $\pi_1(\overrightarrow{G}, u)$; this group is called the \emph{$C$-homotopy group with respect to $u$}.

We refer the reader to \cite{GLMY2} for the precise definition of the $C$-homotopy group.

\subsection{Number of spanning tree in directed square cycles}

Moreover, the subgroup $\pi_1(\overrightarrow{G},u,3)$ can also be defined as in \cite{Lewis}.
For any $u \in \pi_1(\overrightarrow{G},u)$, $u$ has exactly one reduced representative, which is the unique representative of $u$ of minimal length.
Denote this representative by $\tilde{u}$ and write
\[
\tilde{u}=pqp^{-1}.
\]
Then the essential length of $u$ is defined as the length of $q$, where $q$ is cyclically reduced.
Let $\pi_1(\overrightarrow{G},u,3)$ denote the subgroup of $\pi_1(\overrightarrow{G},u)$ generated by elements whose essential length is at most $3$.
Namely, the subgroup $\pi_1(\overrightarrow{G},u,3)$ of $\pi_1(\overrightarrow{G},u)$ is generated by triangles.

It is clear that $\pi_1(\overrightarrow{G},u)=\pi_1(\overrightarrow{G},u,3)$ if $\overrightarrow{G}$ is a tree, directed strip graph, or directed strip graph with tails, while $\pi_1(\overrightarrow{G},u)\neq\pi(\overrightarrow{G},u,3)$ if $\overrightarrow{G}$ is a loop of length at least $4$ or $\overrightarrow{G}=\overrightarrow{C_n^2}$ with $n\geq7$.

Moreover, we say that $\bigcup_{i=0}^k A_i$ are disjoint if $A_i\cap A_{i'}=\emptyset$ for all $i,i'$ ($0\leq i<i'\leq k$).

\begin{thm} \label{th:gsnkj}
Let $n$ be an integer with $n\geq5$.
For every directed spanning tree $\overrightarrow{G}$ rooted by $v_j$ ($j\in\mathbb{Z}$) of $\overrightarrow{C_n^2}$,
there exists a unique nontrivial WCSC subgraph $\overrightarrow{H}$ of $\overrightarrow{C_n^2}$ such that $E(\overrightarrow{G})\subseteq E(\overrightarrow{H})$.
Each such graph $\overrightarrow{H}$ has a form $\overrightarrow{S_{n,k,j}}$ for $0\leq k \leq \lceil\frac{n-2}{2}\rceil$, $0\leq j\leq n-1$.
More precisely, for any $v_j\in V(\overrightarrow{C_n^2})$ ($j\in\mathbb{Z}$),
\begin{equation}\label{eq:dunion}
T_{\overrightarrow{C_n^2},v_j}=
\bigcup_{k=0}^{\lceil\frac{n-2}{2}\rceil}
T_{\overrightarrow{S_{n,k,j}},v_j} \quad \text{(disjoint).}
\end{equation}
\end{thm}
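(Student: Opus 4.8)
The plan is to establish the displayed identity \eqref{eq:dunion} as a disjoint union, from which the ``unique containing WCSC subgraph'' statement follows at once. One inclusion is immediate: each $\overrightarrow{S_{n,k,j}}$ is a spanning subgraph of $\overrightarrow{C_n^2}$ on the same vertex set, so every directed spanning tree of $\overrightarrow{S_{n,k,j}}$ rooted by $v_j$ is also one of $\overrightarrow{C_n^2}$, giving $\bigcup_k T_{\overrightarrow{S_{n,k,j}},v_j}\subseteq T_{\overrightarrow{C_n^2},v_j}$. The substance lies in the reverse inclusion (every tree sits inside some strip with the correct index $j$) and in disjointness (the strip is unique).

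For the reverse inclusion, fix a directed spanning tree $\overrightarrow{G}$ rooted by $v_j$. If $E(\overrightarrow{G})$ contains no frame, then $\overrightarrow{G}$ meets each triangle $T_i$ in at most one edge, so $\overrightarrow{G}$ is itself closed and nontrivial, and Lemma~\ref{lem:2-3} gives $\overrightarrow{G}=\overrightarrow{S_{n,\frac{n-1}{2},j'}}$. Otherwise, let $\overrightarrow{H}$ be the smallest closed spanning subgraph of $\overrightarrow{C_n^2}$ containing $\overrightarrow{G}$, built by repeatedly adjoining the third edge of any triangle $T_i$ already containing two of its edges; then $\overrightarrow{H}$ is closed by construction and, since $\overrightarrow{H}\supseteq\overrightarrow{G}$, weakly connected and spanning, so $\overrightarrow{H}$ is a WCSC subgraph. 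As $\overrightarrow{H}$ contains a frame it is not $(\mathbb{Z}_n,\Window)$, and the main step (below) is to show $\overrightarrow{H}\neq\overrightarrow{C_n^2}$. Granting this, $\overrightarrow{H}$ is nontrivial, so Theorem~\ref{th:4-4} yields $\overrightarrow{H}=\overrightarrow{S_{n,k,j'}}$. In every case a direct computation with $ES(n,k,j')$ shows that $v_{j'}$ is the unique vertex of in-degree $0$ in $\overrightarrow{S_{n,k,j'}}$ (its two in-edges $e_{j'-1}$ and $f_{j'-2}$ both lie in the escape route); hence $v_{j'}$ has in-degree $0$ in $\overrightarrow{G}\subseteq\overrightarrow{H}$, and since the root $v_j$ is the unique in-degree-$0$ vertex of the arborescence $\overrightarrow{G}$, we conclude $j'=j$ and $\overrightarrow{G}\in T_{\overrightarrow{S_{n,k,j}},v_j}$.

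The crux is ruling out $\overrightarrow{H}=\overrightarrow{C_n^2}$, and here I would use a winding invariant compatible with the $C$-homotopy set-up. To a loop in $\overrightarrow{C_n^2}$ assign the integer $w$ equal to its net signed displacement in $\mathbb{Z}$ (a frame traversed forward contributes $+1$, a window $+2$, with signs reversed for backward traversal) divided by $n$. This is well defined, since a loop returns to its base point and so has displacement a multiple of $n$; it is unchanged by backtracking extensions, hence descends to a homomorphism $\pi_1(\overrightarrow{C_n^2},v_j)\to\mathbb{Z}$; and it vanishes on every triangle (as $1+1-2=0$) and therefore on $\pi_1(\overrightarrow{C_n^2},v_j,3)$. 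Because $\overrightarrow{G}$ is a tree it carries no nontrivial loop, and each edge adjoined in forming $\overrightarrow{H}$ completes a triangle $T_i$, so it enlarges the fundamental group only by the generator $[T_i]$; by induction every loop of $\overrightarrow{H}$ then has $w=0$. But the frame cycle $e_0e_1\cdots e_{n-1}$ has $w=1$, so it cannot be a loop of $\overrightarrow{H}$; thus $\overrightarrow{H}$ omits at least one frame and $\overrightarrow{H}\neq\overrightarrow{C_n^2}$. This is precisely the planar ``escape route'' obstruction rendered algebraically, and it is the step I expect to require the most care, the delicate point being the inductive claim that adjoining a triangle-completing edge preserves $w=0$ on all loops.

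Finally, disjointness (and with it uniqueness of $\overrightarrow{H}$) I would prove by a direct in-degree argument. Suppose $\overrightarrow{G}\in T_{\overrightarrow{S_{n,k,j}},v_j}\cap T_{\overrightarrow{S_{n,k',j}},v_j}$ with $k<k'$. Then $E(\overrightarrow{G})$ meets neither $ES(n,k,j)$ nor $ES(n,k',j)$; in particular $f_{j+2k-1}\notin E(\overrightarrow{G})$ because it lies in $ES(n,k,j)$, and $e_{j+2k}\notin E(\overrightarrow{G})$ because $k<k'$ places it in the frame run $\{e_{j-1},\dots,e_{j+2k'-1}\}$ of $ES(n,k',j)$. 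These are the only two edges of $\overrightarrow{C_n^2}$ entering $v_{j+2k+1}$, so $v_{j+2k+1}$ has in-degree $0$ in $\overrightarrow{G}$. Since $k<k'\leq\lceil\frac{n-2}{2}\rceil$ forces $0<2k+1<n$, we have $v_{j+2k+1}\neq v_j$, contradicting that every non-root vertex of the arborescence $\overrightarrow{G}$ has in-degree $1$. Hence the sets on the right-hand side of \eqref{eq:dunion} are pairwise disjoint and the nontrivial WCSC subgraph containing a given $\overrightarrow{G}$ is unique. Assembling the three parts proves \eqref{eq:dunion} and the theorem.
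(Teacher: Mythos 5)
Your proof is correct, and although it follows the same skeleton as the paper's --- form the triangle-closure $\overrightarrow{H}$ of the tree, show it cannot be a trivial WCSC subgraph, and invoke Theorem~\ref{th:4-4} --- it differs at both points where the real work is done, and in both it buys something. For the key step $\overrightarrow{H}\neq\overrightarrow{C_n^2}$, the paper argues abstractly that the closure process preserves the property $\pi_1(\cdot,v_j)=\pi_1(\cdot,v_j,3)$ while the trivial subgraphs violate this equality (a fact it records only for $n\geq7$, which is why it must verify $n=5,6$ separately); you instead use an explicit $\mathbb{Z}$-valued winding homomorphism $w$. Your justification of the inductive step is the one loose point: rather than saying the new edge ``enlarges the fundamental group only by $[T_i]$'' (a van Kampen-type claim not established in the cited homotopy theory), you should use the direct rerouting argument your setup already makes available --- every traversal of the adjoined edge can be replaced by the two-edge detour through $T_i$, which lies in the previous graph and has the same displacement, so the property ``all loops have $w=0$'' is preserved. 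With that repair your argument is complete and, unlike the paper's, works uniformly for all $n\geq5$: note that your parenthetical ``and therefore on $\pi_1(\overrightarrow{C_n^2},v_j,3)$'' is actually false for $n=5,6$ (e.g.\ $(v_0,v_2,v_4,v_0)$ is a directed $3$-loop of $\overrightarrow{C_5^2}$ with $w=1$), but your induction never needs it, since the closure only ever completes the triangles $T_i$. Second, your disjointness/uniqueness argument via the in-degree-$0$ vertex $v_{j+2k+1}$ differs from the paper's, which intersects two strips and appeals to an unproved incomparability of the graphs $\overrightarrow{S_{n,k,j}}$; yours is more elementary and complete, and the same in-degree observation also pins down the root index ($j'=j$), a point the paper passes over silently by reusing the letter $j$.
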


\begin{proof}
Since the assertion can be verified directly for $n=5$ and $6$, we assume $n\geq7$.
Clearly $\pi_1(\overrightarrow{G},v_j)$ is the trivial group for the directed spanning tree rooted by $v_j$ $\overrightarrow{G}$ of $\overrightarrow{C_n^2}$, so in particular $\pi_1(\overrightarrow{G},v_j)=\pi_1(\overrightarrow{G},v_j,3)$ holds.
A directed spanning tree rooted by $v_j$ $\overrightarrow{G}$ of $\overrightarrow{C_n^2}$ that is not closed with respect to some triangle $T_i$, $\pi_1(\overrightarrow{G'},v_j)=\pi_1(\overrightarrow{G'},v_j,3)$ also holds for the graph $\overrightarrow{G'}$ obtained from $\overrightarrow{G}$ by adding the unique missing edge of $T_i$.
This process can be iterated until we reach a closed subgraph containing $\overrightarrow{G}$.
The resulting graph $\overrightarrow{H}$ is a WCSC subgraph $\overrightarrow{H}$ of $\overrightarrow{C_n^2}$, and hence it is one of the graphs classified in Theorem~\ref{th:4-4}, or one of the trivial 
WCSC subgraphs.
Since \(\pi_1(\overrightarrow{H},v_j)=\pi_1(\overrightarrow{H},v_j,3)\)
holds only for nontrivial WCSC subgraph $\overrightarrow{H}$, there exist $j,k$ with $0\leq j\leq n-1$, $0\leq k\leq\lceil\frac{n-2}{2}\rceil$ such that $E(\overrightarrow{G})\subseteq E(\overrightarrow{S_{n,k,j}})$.

It remains to show that the union in \eqref{eq:dunion} is disjoint.
Suppose $E(\overrightarrow{G})\subseteq E(\overrightarrow{S_{n,k',j'}})$ for some $j',k'$ with $0\leq k'\leq \lceil\frac{n-2}{2}\rceil,\;0\leq j'\leq n-1$.
Then the subgraph with edge set $E(\overrightarrow{S_{n,k,j}})\cap \overrightarrow{E(S_{n,k',j'}})$ is a nontrivial WCSC subgraph of $\overrightarrow{C_n^2}$, and hence it coincides with $\overrightarrow{S_{n,k'',j''}}$ for some $j'',k''$ with $0\leq k''\leq \lceil\frac{n-2}{2}\rceil,\;0\leq j''\leq n-1$.
This implies $E(\overrightarrow{S_{n,k'',j''}})\subseteq E(\overrightarrow{S_{n,k,j}})$, which is possible only when $(j,k)=(j'',k'')$.
Then we have $(j,k)=(j',k')$.
Therefore,  the union in \eqref{eq:dunion} is disjoint.
\end{proof}

\begin{thm}
For $v_j\in\mathbb{Z}_n$ $(j\in\mathbb{Z})$,
\[
t(\overrightarrow{C_n^2},v_j)=J_n.
\]
\end{thm}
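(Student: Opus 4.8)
The plan is to combine the disjoint-union decomposition from Theorem~\ref{th:gsnkj} with the strip-graph count from Lemmas~\ref{lem:strip_tail} and~\ref{lem:1}, and then recognize the resulting sum as a Jacobsthal number via Lemma~\ref{jac3}. First I would take cardinalities on both sides of \eqref{eq:dunion}. Since the union is disjoint, this gives
\[
t(\overrightarrow{C_n^2},v_j)=\sum_{k=0}^{\lceil\frac{n-2}{2}\rceil} t(\overrightarrow{S_{n,k,j}},v_j).
\]
By Lemma~\ref{lem:strip_tail}, each summand equals $t(\overrightarrow{S_{n-2k}},1)$, and by Lemma~\ref{lem:1} this is $J_{n-2k-2}+J_{n-2k-1}$ whenever $n-2k\geq2$, i.e.\ for $k\leq\frac{n-2}{2}$. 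The upper limit $\lceil\frac{n-2}{2}\rceil$ forces a split by parity of $n$, which is the first bookkeeping point to handle carefully.

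Next I would reindex the sum. Writing out the terms $J_{n-2k-2}+J_{n-2k-1}$ as $k$ runs over its range produces a telescoping-looking collection of consecutive Jacobsthal indices, so I expect the combined sum to reduce to $\sum_{i} J_i$ over a contiguous block of indices from near $0$ up to $n-1$. The key identity is then Lemma~\ref{jac3}, which evaluates partial sums of the Jacobsthal sequence as $\sum_{i=1}^{m-1}J_i=\frac{1}{2}(J_{m+1}-(-1)^m)$. The goal is to show that the accumulated sum equals $J_n$, which I would verify by substituting the closed form from Lemma~\ref{jac3} and simplifying, using the defining recurrence $J_{m+2}=J_{m+1}+2J_m$ to collapse the result. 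Since the final answer $J_n$ does not depend on $j$, the base-vertex independence already guaranteed by Lemma~\ref{lem:strip_tail} carries through automatically.

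The two parity cases ($n$ even versus $n$ odd) differ in their top term: for $n$ odd the value $k=\frac{n-1}{2}$ corresponds to $\overrightarrow{S_{n,\frac{n-1}{2},j}}$, the windows-only graph of Lemma~\ref{lem:2-3}, contributing $t(\overrightarrow{S_1},1)=1$, whereas for $n$ even the top index is $k=\frac{n-2}{2}$ giving $t(\overrightarrow{S_2},1)=1$. I would treat each case by carefully identifying the smallest index appearing in the telescoped sum and checking the boundary term against the base values $J_0=0$, $J_1=1$. The main obstacle is precisely this boundary-term and parity bookkeeping: making sure the index ranges align so that after applying Lemma~\ref{jac3} the $(-1)^m$ terms cancel and the Jacobsthal recurrence delivers exactly $J_n$ in both parities, rather than an off-by-one neighbor such as $J_{n-1}$ or $J_{n+1}$. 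As a sanity check I would verify the formula against the directly computable small cases $n=5$ and $n=6$ mentioned in the previous proof, confirming $t(\overrightarrow{C_5^2},v_j)=J_5=11$ and $t(\overrightarrow{C_6^2},v_j)=J_6=21$.
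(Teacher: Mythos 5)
Your proposal is correct and follows essentially the same route as the paper: take cardinalities in the disjoint union of Theorem~\ref{th:gsnkj}, reduce each summand to $t(\overrightarrow{S_{n-2k}},1)=J_{n-2k-2}+J_{n-2k-1}$ via Lemmas~\ref{lem:strip_tail} and~\ref{lem:1}, split by the parity of $n$ (with the boundary terms $t(\overrightarrow{S_1},1)=t(\overrightarrow{S_2},1)=1$ handled exactly as you describe), and collapse the resulting partial sum of Jacobsthal numbers to $J_n$ by Lemma~\ref{jac3}. Your sanity checks $J_5=11$ and $J_6=21$ are also consistent with the paper's treatment of the small cases.
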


\begin{proof}
\begin{align*}
t(\overrightarrow{C_n^2},v_j)
&=\sum_{k=0}^{\lceil\frac{n-2}{2}\rceil}t(\overrightarrow{S_{n,k,j}},v_j) &&\text{(by Theorem \ref{th:gsnkj})}
\displaybreak[0]\\
&=\sum_{k=0}^{\lceil\frac{n-2}{2}\rceil}t(\overrightarrow{S_{n-2k}},1) &&\text{(by Lemma \ref{lem:strip_tail})}
\displaybreak[0]\\
&=\begin{cases}
\sum_{k=0}^{(n-2)/2}t(\overrightarrow{S_{2k+2}},1) &\text{if $n$ is even,}\\
1+\sum_{k=0}^{(n-1)/2}t(\overrightarrow{S_{2k+3}},1) &\text{if $n$ is odd}
\end{cases}
\displaybreak[0]\\
&=\begin{cases}
\sum_{k=0}^{(n-2)/2}(J_{2k}+J_{2k+1}) &\text{if $n$ is even,}\\
1+\sum_{k=0}^{(n-3)/2}(J_{2k+1}+J_{2k+2}) &\text{if $n$ is odd}
\end{cases} &&\text{(by Lemma \ref{lem:1})}
\displaybreak[0]\\
&=\begin{cases}
\sum_{k=0}^{n-1}J_k &\text{if $n$ is even,}\\
1+\sum_{k=1}^{n-1}J_k &\text{if $n$ is odd}
\end{cases}
\displaybreak[0]\\
&=J_n &&\text{(by Lemma \ref{jac3}).}
\end{align*}
\end{proof}

\begin{rem*}
We have verified by computer that the assertion of
Theorem \ref{th:gsnkj} holds for the directed graphs $\overrightarrow{C_9^3}$ and $\overrightarrow{C_{10}^3}$ (see \cite{WF, Yong}),
if we modify the definition of trivial weakly connected spanning subgraphs
to be the ones whose $C$-homotopy group is nontrivial.
There are exactly $61$ and $104$ nontrivial WCSC subgraphs up to automorphism of
$\overrightarrow{C_9^3}$ and $\overrightarrow{C_{10}^3}$, respectively, 
and every directed spanning tree is contained in a unique
nontrivial WCSC subgraph.
\end{rem*}

\section*{Acknowledgements}

This work was supported by JSPS Grant-in-Aid for JSPS Fellows (23KJ2020).

\end{document}